\documentclass[12pt,a4paper]{article}
\usepackage[T1]{fontenc}
\usepackage{amsfonts}
\usepackage{amssymb,amsmath,amsbsy,amsthm}
\usepackage{newtxtext,newtxmath}
\usepackage{blkarray}
\usepackage{tikz}
\usepackage{tkz-euclide}
\usepackage{tikz-cd}
\usepackage{enumerate}
\usetikzlibrary{patterns,arrows.meta,calc,positioning}
\usepackage{graphicx}
\usepackage{float}
\usepackage{cite}
\usepackage{mathtools}
\usepackage{indentfirst}
\usepackage{lineno}
\usepackage{titlesec}
\usepackage{enumitem}
\usepackage{xcolor}
\usepackage{aliascnt}
\usepackage{varioref}
\usepackage{microtype}
\usepackage{needspace}
\usepackage{xurl}
\usepackage{hyperref}
\hypersetup{
  colorlinks=true,
  linkcolor=cyan,
  anchorcolor=cyan,
  citecolor=red,
  urlcolor=black,
  pdfencoding=auto,
  pdftitle={Log-Concavity of Conic Intrinsic Volumes},
  pdfauthor={Suijie Wang},
  pdfsubject={Conic intrinsic volumes and Alexandrov--Fenchel inequalities},
  pdfkeywords={conic intrinsic volume, mixed volume, Alexandrov--Fenchel inequality,
    Lorentzian polynomial, relative log-concavity, ultra-log-concavity}
}
\usepackage[nameinlink,capitalise,noabbrev]{cleveref}

\numberwithin{equation}{section}
\newtheorem{theorem}{Theorem}[section]
\newaliascnt{lemma}{theorem}

\aliascntresetthe{lemma}

\newaliascnt{proposition}{theorem}
\newtheorem{proposition}[proposition]{Proposition}
\aliascntresetthe{proposition}

\newaliascnt{corollary}{theorem}

\aliascntresetthe{corollary}

\newaliascnt{claim}{theorem}

\aliascntresetthe{claim}

\newaliascnt{conjecture}{theorem}

\aliascntresetthe{conjecture}

\theoremstyle{definition}
\newaliascnt{definition}{theorem}

\aliascntresetthe{definition}

\newaliascnt{example}{theorem}

\aliascntresetthe{example}

\newaliascnt{notation}{theorem}

\aliascntresetthe{notation}

\newaliascnt{question}{theorem}

\aliascntresetthe{question}

\theoremstyle{remark}
\newaliascnt{remark}{theorem}

\aliascntresetthe{remark}

\allowdisplaybreaks

\tikzset{
  graphvertex/.style={circle,fill,inner sep=1.35pt},
  leafvertex/.style={circle,draw,fill=white,inner sep=1.5pt},
  facevertex/.style={circle,draw,fill=white,inner sep=1.5pt,font=\scriptsize},
  graphlabel/.style={font=\scriptsize,inner sep=1pt}
}

\newcommand{\R}{\mathbb{R}}
\newcommand{\E}{\mathbb{E}}
\newcommand{\vol}{\operatorname{Vol}}

\begin{document}
\begin{center}
{\Large\bf Log-Concavity of Conic Intrinsic Volumes}\\[7pt]
\end{center}
\vskip3mm

\begin{center}
Houshan Fu$^{1}$ \quad Suijie Wang$^{2,*}$\\[8pt]
  
$^{1}$School of Mathematics and Information Science, Guangzhou University\\
Guangzhou 510006, Guangdong, P. R. China\\[8pt]

$^{2}$School of Mathematics, Greater Bay Area Institute for Innovation, Hunan University\\
 Changsha 410082, Hunan, P. R. China\\[8pt]

$^*$Correspondence to be sent to: wangsuijie@hnu.edu.cn\\
E-mail: fuhoushan@gzhu.edu.cn\\[12pt]
\end{center}
\vskip 3mm

\begin{abstract}
Let $n\ge1$ and $C\subseteq\R^n$ be a closed convex cone with conic intrinsic volumes $v_0(C),\ldots,v_n(C)$.  We prove the long-standing log-concavity conjecture for this sequence, in the stronger form
\[
 v_k(C)^2\ge \rho_k\rho_{n-k}v_{k-1}(C)v_{k+1}(C),  \qquad 1\le k\le n-1,
\]
where, for $l\ge1$, $\rho_l=\frac{l+1}{l}\frac{\omega_{l-1}\omega_{l+1}}{\omega_l^2}>1$ and $\omega_j$ is the volume of the Euclidean unit ball in $\R^j$.  The proof applies
the Alexandrov--Fenchel inequality to the Takemura--Kuriki identity
\[
 V(A[k],D[n-k])=\frac{\omega_k\omega_{n-k}}{\binom nk}v_k(C),\qquad A=C\cap B^n,\quad D=C^\circ\cap B^n,
\]
where $C^\circ$ is the polar cone, $B^n$ is the Euclidean unit ball, and repeated arguments are indicated by brackets.  A Master Steiner argument gives the identity directly for arbitrary closed convex cones, including degenerate ones.  We also give a circular-cone counterexample to the standard ultra-log-concavity normalizations.
\end{abstract}

\noindent\textbf{Keywords:}conic intrinsic volume; mixed volume; Alexandrov--Fenchel inequality;
log-concavity

\medskip
\noindent\textbf{Mathematics Subject Classification:}
52A39, 52A22 (Primary); 52A20, 60D05 (Secondary).

\section{Introduction}
The study of conic intrinsic volumes associated to convex cone, is a classical subject in convex geometry, going back at least to Sommerville \cite{Sommerville}. The conic intrinsic volumes
\[
 v_0(C),v_1(C),\ldots,v_n(C)
\]
of a closed convex cone $C\subseteq\R^n$ form a probability
distribution.  They arise in spherical Steiner formulas, conic
integral geometry, Gaussian projection theory, and the analysis of
phase transitions in random convex optimization; see
\cite{AmelunxenLotz,ALMT,McCoyTropp,SchneiderWeil}.

Amelunxen formulated the equivalent spherical log-concavity statement
as Conjecture~4.4.16 in \cite{AmelunxenThesis}.  The problem was later
recorded in the study of conic phase transitions
\cite[Section~6.1]{ALMT}, in the Dagstuhl report
\cite[Section~5.4]{AmelunxenDagstuhl}, and in Schneider's monograph
\cite[Section 4.5]{SchneiderCones}.  In conic notation, it asks whether
\[
 v_k(C)^2\ge v_{k-1}(C)v_{k+1}(C),
 \qquad 1\le k\le n-1.
\]
A 2025 preprint on concentration for the intrinsic-volume random
variable continued to list general log-concavity as open
\cite[Section~6]{FernandezMelbournePalafox}.

For $j\ge0$, write $\omega_j=\pi^{j/2}/\Gamma(j/2+1)$, with $\omega_0=1$, and let $V$ denote the usual mixed volume.  In expressions such as
$V(K[k],L[n-k])$, brackets indicate the number of repeated arguments.

Takemura and Kuriki established the relevant bridge to mixed-volume
theory in their study of chi-bar-squared distributions.  In their
standing full-dimensional setting, their result
\cite[Theorem~2.1]{TakemuraKuriki} reads, in the present notation, for
$A=C\cap B^n$ and $D=C^\circ\cap B^n$,
\begin{equation}\label{eq:intro-mixed}
 V(A[k],D[n-k])
 =\frac{\omega_k\omega_{n-k}}{\binom nk}v_k(C),
 \qquad 0\le k\le n.
\end{equation}
Their weights $w_k$ are the quantities now denoted by $v_k(C)$, and
their cone $K^*$ is the polar cone $C^\circ$ used here.  The
Master Steiner argument in Section~\ref{sec:mixed} gives a direct
derivation for arbitrary closed convex cones, including degenerate
ones.

Formula~\eqref{eq:intro-mixed} is therefore not new.  The contribution
of this note is to combine it with the Alexandrov--Fenchel inequality.
The mixed-volume sequence on the left of \eqref{eq:intro-mixed} is
log-concave, and substitution gives a strict improvement of the
conjectured inequality.  This mechanism differs from the earlier
relation in which Amelunxen expressed the Euclidean intrinsic volumes
of $C\cap B^n$ as triangular transforms of the conic intrinsic volumes
\cite[Proposition~4.4.18]{AmelunxenThesis}.  He observed that
log-concavity of the Euclidean sequence does not invert through this
transform \cite[Remarks~4.4.19--4.4.20]{AmelunxenThesis}.  In
contrast, \eqref{eq:intro-mixed} pairs $C\cap B^n$ with
$C^\circ\cap B^n$ and identifies each conic intrinsic volume with one
mixed volume.

For $l\ge1$, set
$\rho_l:=\frac{l+1}{l}
\frac{\omega_{l-1}\omega_{l+1}}{\omega_l^2}$.
As shown in Section~\ref{sec:refined}, one has $\rho_l>1$.

\begin{theorem}[Refined log-concavity of conic intrinsic volumes]
\label{thm:main-lc}
Let $n\ge1$.  Every closed convex cone $C\subseteq\R^n$ satisfies, for
$1\le k\le n-1$,
\begin{equation}\label{eq:intro-strong}
 v_k(C)^2\ge
 \rho_k\rho_{n-k}v_{k-1}(C)v_{k+1}(C).
\end{equation}
\end{theorem}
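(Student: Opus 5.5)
We combine the Takemura--Kuriki identity \eqref{eq:intro-mixed}, assumed for every closed convex cone (the paper establishes it in Section~\ref{sec:mixed} by a Master Steiner argument), with the Alexandrov--Fenchel inequality. Set $A=C\cap B^n$ and $D=C^\circ\cap B^n$; both are compact convex sets, possibly lower-dimensional. Write $m_k:=V(A[k],D[n-k])$ for $0\le k\le n$. The Alexandrov--Fenchel inequality, applied with the remaining $n-2$ arguments equal to $A[k-1],D[n-k-1]$, gives
\[
 m_k^2\ge m_{k-1}m_{k+1},\qquad 1\le k\le n-1 .
\]
This holds for arbitrary compact convex sets, by continuity of mixed volumes and approximation by full-dimensional bodies, so degenerate cones need no separate treatment.

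Substituting $m_j=\frac{\omega_j\omega_{n-j}}{\binom nj}v_j(C)$ turns this into
\[
 v_k(C)^2\ge \frac{\binom nk^2}{\binom n{k-1}\binom n{k+1}}\cdot
 \frac{\omega_{k-1}\omega_{k+1}}{\omega_k^2}\cdot
 \frac{\omega_{n-k+1}\omega_{n-k-1}}{\omega_{n-k}^2}\; v_{k-1}(C)v_{k+1}(C).
\]
The prefactors are all positive, so no division by zero arises and no case distinction is needed when some $v_j(C)$ vanish. The binomial ratio is
\[
 \frac{\binom nk^2}{\binom n{k-1}\binom n{k+1}}=\frac{(k+1)(n-k+1)}{k(n-k)}
 =\frac{k+1}{k}\cdot\frac{(n-k)+1}{n-k}.
\]
Splitting this factor between the two $\omega$-ratios, the full constant becomes exactly
\[
 \Big(\tfrac{k+1}{k}\tfrac{\omega_{k-1}\omega_{k+1}}{\omega_k^2}\Big)\Big(\tfrac{l+1}{l}\tfrac{\omega_{l-1}\omega_{l+1}}{\omega_l^2}\Big)\Big|_{l=n-k}=\rho_k\rho_{n-k},
\]
which yields \eqref{eq:intro-strong}.

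The main obstacle is not this algebra but the validity of \eqref{eq:intro-mixed} for degenerate cones, such as lower-dimensional cones or cones containing lines, where $A$ or $D$ is flat. Taking that identity as given from Section~\ref{sec:mixed}, the only remaining care is to invoke Alexandrov--Fenchel in its general form for compact convex sets rather than for convex bodies with interior. The statement $\rho_l>1$ is not needed for the inequality itself; it only shows that the result strengthens the conjecture, and it follows from log-convexity of $\Gamma$ (a Gautschi/Wendel-type bound), which the paper treats separately.
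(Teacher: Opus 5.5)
Your proposal is correct and follows the paper's proof: Alexandrov--Fenchel applied to $V(A[k],D[n-k])$ (via the approximations $A+\varepsilon B^n$, $D+\varepsilon B^n$ and Hausdorff continuity of mixed volumes), then substitution of the Takemura--Kuriki identity and the binomial ratio $(k+1)(n-k+1)/[k(n-k)]$. As in the paper, $\rho_l>1$ is not needed for the inequality itself; it only shows that the result strengthens the conjecture.
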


\section{Preliminaries}
Throughout, $n\ge1$, and $B^n$ denotes the closed Euclidean unit ball in $\R^n$. Thus $\omega_n$ equals the usual volume of $B^n$, i.e., $\omega_n=\vol_n(B^n)$. A set $C\subseteq \R^n$ is a \emph{convex cone} if it is closed under non-negative linear combinations. For a closed convex cone $C\subseteq\R^n$, its \emph{polar cone} is given by
\[
C^\circ:=\{y\in\R^n:\langle x,y\rangle\le0\text{ for every }x\in C\}.
\]
Let $\Pi_C$ denote Euclidean projection onto $C$.  Moreau's decomposition theorem \cite{Moreau} states that every $x\in\R^n$ has the orthogonal decomposition
\begin{equation}\label{eq:moreau}
 x=\Pi_C(x)+\Pi_{C^\circ}(x),\qquad\langle\Pi_C(x),\Pi_{C^\circ}(x)\rangle=0.
\end{equation}

The conic intrinsic volumes $v_0(C),\ldots,v_n(C)$ may be defined
through the conic Steiner formula, or equivalently by continuous
extension from polyhedral cones; see
\cite[Definition~2.2 and Section~8.2]{McCoyTropp}.  They satisfy
\[
 v_k(C)\ge0,\qquad
 \sum_{k=0}^n v_k(C)=1.
\]

We use the Master Steiner formula of McCoy and Tropp \cite[Theorem~3.1 and Corollary~3.2]{McCoyTropp}.  Let $X_0=0$, and for $j\ge1$ let
$X_j$ have the chi-squared distribution with $j$ degrees of freedom.
For every bounded Borel function $f:[0,\infty)^2\to\R$ ,
\begin{equation}\label{eq:master-Steiner}
 \E\left[f\bigl(\|\Pi_C(g)\|^2,\|\Pi_{C^\circ}(g)\|^2\bigr)\right]
 =\sum_{k=0}^n v_k(C)\E\left [f(X_k,X'_{n-k})\right],
\end{equation}
where $g\sim\mathcal{N}(0,I_n)$ is a standard Gaussian vector in $\R^n$ and the variables in each pair
$(X_k,X'_{n-k})$ are independent, with $X'_j$ an independent copy
of $X_j$.  Following \cite[(5.28)]{SchneiderConvexBodies}, we
normalize mixed volumes by
\begin{equation}\label{eq:mixed-expansion}
 \vol_n(tK+sL)
 =\sum_{k=0}^n\binom nk
 V(K[k],L[n-k])t^ks^{n-k}
\end{equation}
for compact convex sets $K,L\subseteq\R^n$ and $s,t\ge0$.
The Alexandrov--Fenchel inequality
\cite[Theorem 7.3.1]{SchneiderConvexBodies} is
\begin{equation}\label{eq:AF-prelim}
 V(K[k],L[n-k])^2\ge V(K[k-1],L[n-k+1])V(K[k+1],L[n-k-1]),\quad 1\le k\le n-1.
\end{equation}
\section{The Takemura--Kuriki representation}\label{sec:mixed}
We recall the set identity and mixed-volume formula of Takemura and
Kuriki \cite[Lemma~2.1 and Theorem~2.1]{TakemuraKuriki}.  Their formula was stated under the standing nonempty-interior assumption of their
statistical setting.  The Master Steiner argument below supplies a short derivation in closed cone generality, including all lower-dimensional and non-pointed cases; no novelty is claimed for the representation itself.

Fix a closed convex cone $C\subseteq\R^n$ and set $A=C\cap B^n$ and $D:=C^\circ\cap B^n$. For every $s,t\ge0$, Takemura and Kuriki's identity
\cite[Lemma~2.1]{TakemuraKuriki} gives
\begin{equation}\label{eq:set-identity}
 tA+sD
 =(C+sB^n)\cap(C^\circ+tB^n)
 =\{x\in\R^n:\|\Pi_C(x)\|\le t,
                  \ \|\Pi_{C^\circ}(x)\|\le s\}.
\end{equation}
For completeness, the identity holds without a dimensional
assumption.  Indeed, if $x=a+b$ with $a\in tA$ and $b\in sD$, then
$x\in C+sB^n$ and $x\in C^\circ+tB^n$.  Conversely, Moreau's
decomposition and the distance identities
\[
 \operatorname{dist}(x,C)=\|\Pi_{C^\circ}(x)\|,
 \qquad
 \operatorname{dist}(x,C^\circ)=\|\Pi_C(x)\|
\]
show that membership in the intersection on the right of
\eqref{eq:set-identity} is equivalent to
$\|\Pi_Cx\|\le t$ and $\|\Pi_{C^\circ}x\|\le s$.  In that case,
\eqref{eq:moreau} expresses $x$ as an element of $tA+sD$.

\begin{proposition}[Takemura--Kuriki formula in closed cone form]
For all $s,t\ge0$,
\begin{equation}\label{eq:intro-volume}
 \vol_n(tA+sD)=\sum_{k=0}^n
 \omega_k\omega_{n-k}v_k(C)t^ks^{n-k}.
\end{equation}
Consequently, \eqref{eq:intro-mixed} holds for $0\le k\le n$.
\end{proposition}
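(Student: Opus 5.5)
The plan is to compute $\vol_n(tA+sD)$ as a Gaussian expectation and then read off its value from the Master Steiner formula \eqref{eq:master-Steiner}. By the set identity \eqref{eq:set-identity}, the indicator of $tA+sD$ is
\[
\mathbf 1\{\|\Pi_C(x)\|^2\le t^2,\ \|\Pi_{C^\circ}(x)\|^2\le s^2\}.
\]
Lebesgue volume is not directly a Gaussian expectation. We therefore first use the $1$-homogeneity of $\Pi_C$ and $\Pi_{C^\circ}$ and scale in a radial variable.

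\textbf{Step 1.} For $\lambda>0$, set
\[
f_\lambda(u,v)=\mathbf 1\{u\le \lambda^2t^2,\ v\le\lambda^2s^2\}.
\]
Then
\[
\E f_\lambda\bigl(\|\Pi_C g\|^2,\|\Pi_{C^\circ}g\|^2\bigr)=(2\pi)^{-n/2}\int_{\lambda(tA+sD)}e^{-\|x\|^2/2}\,dx .
\]
Since $tA+sD$ is compact, let $\lambda\to0$ after multiplying by $\lambda^{-n}$. The change of variables $x=\lambda y$ together with dominated convergence gives
\[
\lambda^{-n}\E f_\lambda\;\longrightarrow\;(2\pi)^{-n/2}\vol_n(tA+sD).
\]

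\textbf{Step 2.} Apply \eqref{eq:master-Steiner} to $f_\lambda$, which is bounded and Borel. This gives
\[
\E f_\lambda=\sum_k v_k(C)\,P(X_k\le\lambda^2t^2)\,P(X'_{n-k}\le\lambda^2s^2),
\]
using independence. Next, $P(X_j\le\lambda^2r^2)$ is the standard Gaussian measure in $\R^j$ of the ball $\lambda rB^j$, which equals $(2\pi)^{-j/2}\omega_j\lambda^jr^j(1+o(1))$. For $j=0$ this reads $P(X_0\le\cdot)=1=\omega_0$. Each summand is therefore $(2\pi)^{-n/2}\lambda^n\omega_k\omega_{n-k}t^ks^{n-k}(1+o(1))$, and the powers of $\lambda$ combine to exactly $\lambda^n$. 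Dividing by $\lambda^n$ and letting $\lambda\to0$ yields \eqref{eq:intro-volume} for $s,t>0$.

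For the boundary cases $s=0$ or $t=0$, there are two routes. One is to use the continuity of $\vol_n$ under Hausdorff-continuous Minkowski combinations, which holds because both sides are polynomials or continuous in $(s,t)$. Alternatively, run the same argument directly, noting that $X_0=0$ makes the $k=0$ or $k=n$ term behave correctly.

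\textbf{Step 3.} For the consequence, compare \eqref{eq:intro-volume} with the expansion \eqref{eq:mixed-expansion} for $K=A$, $L=D$. Both are polynomials in $(t,s)$ agreeing on $[0,\infty)^2$, so their coefficients coincide:
\[
\binom nk V(A[k],D[n-k])=\omega_k\omega_{n-k}v_k(C),
\]
which is \eqref{eq:intro-mixed}.

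The main obstacle is the limit interchange in Step 1 and the small-ball asymptotics in Step 2. Neither is deep. Step 1 uses that $tA+sD$ is bounded, so the Gaussian density tends to $1$ uniformly on $\lambda(tA+sD)$. Step 2 uses that $e^{-\|y\|^2/2}\to1$ uniformly on $\lambda rB^j$. Degenerate cones require no special treatment, because \eqref{eq:set-identity} and \eqref{eq:master-Steiner} hold for all closed convex cones.
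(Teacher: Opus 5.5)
Your proof is correct and has the same structure as the paper's: the set identity \eqref{eq:set-identity}, the Master Steiner formula applied to a bounded test function, and coefficient comparison with \eqref{eq:mixed-expansion}. The difference is how Gaussian measure is turned into Lebesgue volume. The paper uses the tilted test function $e^{(a+b)/2}\mathbf 1_{\{a\le t^2,\,b\le s^2\}}$; since $\|\Pi_C x\|^2+\|\Pi_{C^\circ}x\|^2=\|x\|^2$, this cancels the Gaussian density exactly and leads to a closed-form tilted chi-squared moment, so volume is in fact directly a Gaussian expectation. Your indicator-plus-scaling argument ($\lambda\to0$ with small-ball asymptotics) reaches the same identity through two routine limit interchanges.
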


\begin{proof}
Both sides of \eqref{eq:intro-volume} are continuous on $[0,\infty)^2$, the left-hand side by Hausdorff continuity of volume. It therefore suffices to take $s,t>0$. Consider the bounded Borel function
\[
 f_{s,t}(a,b)=e^{(a+b)/2}\mathbf 1_{\{a\le t^2,\ b\le s^2\}}, \quad (a,b)\in[0,\infty)^2.
\]
By the orthogonality in \eqref{eq:moreau},
\[
 \|\Pi_C(x)\|^2+\|\Pi_{C^\circ}(x)\|^2=\|x\|^2.
\]
Together with \eqref{eq:set-identity},  this yields
\begin{equation}\label{eq:master-left}
 \E\!\left[f_{s,t}\bigl(\|\Pi_C(g)\|^2, \|\Pi_{C^\circ}(g)\|^2\bigr)\right]=(2\pi)^{-n/2}\vol_n(tA+sD).
\end{equation}
For every integer $d\ge0$ and $r>0$, direct integration of the chi-squared density gives
\begin{equation}\label{eq:tilted-chi}
 \E\left[e^{X_d/2}\mathbf 1_{\{X_d\le r^2\}}\right]
 =\frac{r^d}{2^{d/2}\Gamma(d/2+1)},
\end{equation}
with value $1$ when $d=0$.  Apply the Master Steiner formula \eqref{eq:master-Steiner} to $f_{s,t}$ and use the independence of $X_k$ and $X'_{n-k}$.  Equations \eqref{eq:master-left} and
\eqref{eq:tilted-chi} yield
\[
 (2\pi)^{-n/2}\vol_n(tA+sD)=\sum_{k=0}^n v_k(C)\frac{t^ks^{n-k}}{2^{n/2}\Gamma(k/2+1)\Gamma((n-k)/2+1)}.
\]
Multiplication by $(2\pi)^{n/2}$ proves
\eqref{eq:intro-volume}.  Comparing coefficients with
\eqref{eq:mixed-expansion} proves \eqref{eq:intro-mixed}.
\end{proof}
\section{Strengthened log-concavity and its limits}\label{sec:refined}

We first verify the strict factor in Theorem~\ref{thm:main-lc}.  Set
$b_j=j!\omega_j$.  The duplication formula gives
\[
 b_j=2^j\pi^{(j-1)/2}
 \Gamma\left(\frac{j+1}{2}\right).
\]
Strict log-convexity of the Gamma function yields
$b_j^2<b_{j-1}b_{j+1}$.  Since
\[
 \frac{b_{j-1}b_{j+1}}{b_j^2}
 =\frac{j+1}{j}
  \frac{\omega_{j-1}\omega_{j+1}}{\omega_j^2}
 =\rho_j,
\]
one has $\rho_j>1$ for every $j\ge1$.

Applying the Alexandrov--Fenchel inequality to the coefficients in \eqref{eq:intro-volume} yields the conjectured inequalities in Theorem \ref{thm:main-lc}.
\begin{proof}[Proof of Theorem \ref{thm:main-lc}]
Fix a closed convex cone $C\subseteq\R^n$, set
$A=C\cap B^n$ and $D=C^\circ\cap B^n$, and write
$W_k=V(A[k],D[n-k])$.  To include the possibility that $A$ or $D$ is
lower-dimensional, for $\varepsilon>0$ set
\[
 A_\varepsilon=A+\varepsilon B^n,
 \qquad D_\varepsilon=D+\varepsilon B^n.
\]
These are convex bodies, so the Alexandrov--Fenchel inequality applies
to them.  Letting $\varepsilon\downarrow0$ and using continuity of
mixed volumes in the Hausdorff metric
\cite[Section~5.1]{SchneiderConvexBodies} gives
\begin{equation}\label{eq:AF}
 W_k^2\ge W_{k-1}W_{k+1}.
\end{equation}
Substitution of \eqref{eq:intro-mixed} into \eqref{eq:AF} gives
\begin{align*}
 v_k(C)^2
 &\ge
 \frac{\binom nk^2}
 {\binom n{k-1}\binom n{k+1}}
 \frac{\omega_{k-1}\omega_{k+1}}{\omega_k^2}
 \frac{\omega_{n-k-1}\omega_{n-k+1}}
      {\omega_{n-k}^2}
 v_{k-1}(C)v_{k+1}(C)\\
 &=\rho_k\rho_{n-k}
 v_{k-1}(C)v_{k+1}(C).
\end{align*}
Here we used
$\binom nk^2/[\binom n{k-1}\binom n{k+1}]
=(k+1)(n-k+1)/[k(n-k)]$.
This proves \eqref{eq:intro-strong}.
\end{proof}

\medskip
\noindent\emph{Limits of ultra-log-concavity.}
Both standard ultra-log-concavity normalizations can fail.  It is
enough to test the weaker Poisson condition, which for a sequence
$(a_k)_{k=0}^n$ with interval support requires
\[
 a_k^2\ge
 \left(1+\frac1k\right)a_{k-1}a_{k+1}.
\]
The binomial ULC$(n)$ condition contains the additional factor
$1+1/(n-k)$ and is therefore stronger; see
\cite{MarsigliettiMelbourne}.  Let
\[
 \operatorname{Circ}_d(\alpha)
 =\{x\in\R^d:x_1\ge\|x\|\cos\alpha\},
 \qquad 0<\alpha<\frac\pi2.
\]
The conic intrinsic volumes of circular cones are given explicitly in
\cite[Appendix~D.1]{ALMT}.

Take $d=4$ and $\alpha=\pi/3$.  The entries needed at $k=1$ are
\[
 v_0=\frac16-\frac{\sqrt3}{4\pi},
 \qquad
 v_1=\frac18,
 \qquad
 v_2=\frac{\sqrt3}{2\pi}.
\]
The Poisson condition would require $v_1^2\ge2v_0v_2$, but
\[
 2v_0v_2-v_1^2
 =\frac{32\sqrt3\,\pi-3\pi^2-144}{192\pi^2}>0.
\]
Indeed, using $333/106<\pi<355/113$ and $265/153<\sqrt3$ gives
\[
 \pi(32\sqrt3-3\pi)-144
 >\frac{110453}{217073}>0.
\]
Hence both conditions fail.  Since conic intrinsic volumes are
continuous under conic Hausdorff convergence and circular cones admit
polyhedral approximations
\cite[Fact~7.4 and Proposition~8.2]{McCoyTropp}, the same strict violations occur
for all sufficiently close polyhedral approximants.

\section*{Declaration on the use of generative AI}
During the preparation of this manuscript, the authors used ChatGPT  primarily to assist with literature searches and identify relevant prior work. The authors carefully examined all cited sources, verified their relevance and mathematical content, and take full responsibility for all arguments, citations, and conclusions presented in this paper.

\section*{Acknowledgements}
This work is supported by the National Natural Science Foundation of China (Grant No. 12571350) and the Guangdong Basic and Applied Basic Research Foundation (Grant No. 2026A1515012237, Grant No. 2025A1515010457). 
\begingroup
\hbadness=10000

\endgroup


\begin{thebibliography}{99}
\bibitem{AmelunxenThesis}
D.~Amelunxen,
\emph{Geometric Analysis of the Condition of the Convex Feasibility
Problem}, Ph.D. thesis, Universität Paderborn, 2011.

\bibitem{AmelunxenDagstuhl}
D.~Amelunxen,
Conjecture: log-concavity of conic intrinsic volumes,
in P.~B\"urgisser, F.~Cucker, M.~Karpinski, and N.~Vorobjov (eds.),
\emph{Complexity of Symbolic and Numerical Problems
(Dagstuhl Seminar 15242)},
\emph{Dagstuhl Rep.} \textbf{5} (2016), no.~6, pp. 44.


\bibitem{AmelunxenLotz}
D.~Amelunxen and M.~Lotz,
Intrinsic volumes of polyhedral cones: a combinatorial perspective,
\emph{Discrete Comput. Geom.} \textbf{58} (2017), 371--409.

\bibitem{ALMT}
D.~Amelunxen, M.~Lotz, M.~B. McCoy, and J.~A. Tropp,
Living on the edge: phase transitions in convex programs with random
data,
\emph{Inf. Inference} \textbf{3} (2014), 224--294.

\bibitem{FernandezMelbournePalafox}
M.~Fern\'andez-Unzueta, J.~Melbourne, and G.~Palafox-Castillo,
On convex functions of Gaussian variables,
arXiv:2510.06676, 2025.

\bibitem{LinLindsay}
Y.~Lin and B.~G. Lindsay,
Projections on cones, chi-bar squared distributions, and Weyl's formula,
\emph{Statist. Probab. Lett.} \textbf{32} (1997), no.~4, 367--376.

\bibitem{MarsigliettiMelbourne}
A.~Marsiglietti and J.~Melbourne,
Concentration inequalities for log-concave sequences,
\emph{Int. Math. Res. Not.} (2026), no.~4, rnag023.

\bibitem{McCoyTropp}
M.~B. McCoy and J.~A. Tropp,
From Steiner formulas for cones to concentration of intrinsic volumes,
\emph{Discrete Comput. Geom.} \textbf{51} (2014), 926--963.

\bibitem{Moreau}
J. J.~Moreau,
D\'ecomposition orthogonale d'un espace hilbertien selon deux c\^ones
mutuellement polaires,
\emph{C. R. Acad. Sci. Paris} \textbf{255} (1962), 238--240.

\bibitem{SchneiderConvexBodies}
R.~Schneider,
Convex Bodies: The Brunn--Minkowski Theory,
2nd expanded ed., Encyclopedia of Mathematics and its Applications,
vol.~151, Cambridge University Press, Cambridge, 2014.

\bibitem{SchneiderCones}
R.~Schneider,
Convex Cones: Geometry and Probability,
Lecture Notes in Mathematics, vol.~2319,
Springer, Cham, 2022.

\bibitem{SchneiderWeil}
R.~Schneider and W.~Weil,
Stochastic and Integral Geometry,
Probability and its Applications, Springer, Berlin, 2008.

\bibitem{Sommerville}
D. M. Y. Sommerville, The relations connecting the angle-sums and volume of a polytope in
space of n dimensions, \emph{Proc. R. Soc. Lond. A} \textbf{115} (1927), no 770, 103--119.

\bibitem{TakemuraKuriki}
A.~Takemura and S.~Kuriki,
Weights of $\bar\chi^2$ distribution for smooth or piecewise smooth
cone alternatives,
\emph{Ann. Statist.} \textbf{25} (1997), no.~6, 2368--2387.

\end{thebibliography}
\end{document}